\documentclass{amsart}
\usepackage{amsfonts,amssymb,amscd,amsmath,enumerate,verbatim,calc}
\newtheorem{theorem}{Theorem}[section]

\theoremstyle{definition}
\theoremstyle{definitions}
\newtheorem{definition}[theorem]{Definition}

\newtheorem{remark}[theorem]{Remark}

\theoremstyle{notations}
\theoremstyle{note}

\theoremstyle{remarks}

\newcommand{\T}{\mathrm}

\newcommand{\F}{\mathbb{F}}

\newcommand{\fa}{\frak{a}}

\newcommand{\fb}{\frak{b}}

\newcommand{\fp}{\frak{p}}

\begin{document}
\author[Z. Sepasdar]
{Zahra Sepasdar}

\title[two dimensional cyclic codes]
{Generator matrix for two-dimensional cyclic codes of arbitrary length}
\subjclass[2010]{12E20, 94B05, 94B15, 94B60} \keywords{two dimensional cyclic code, generator matrix, generator polynomial}
\thanks{E-mail addresses:
zahra.sepasdar@mail.um.ac.ir and zahra.sepasdar@gmail.com}
\maketitle

\begin{center}
{\it
 Department of Pure Mathematics, Ferdowsi University of Mashhad,\\
P.O.Box 1159-91775, Mashhad, Iran} \\
\end{center}
\vspace{0.4cm}
\begin{abstract}
Two-dimensional cyclic codes of length $n=\ell s$ over the finite field $\F$ are ideals of the polynomial ring $\F[x,y]/{<x^s -1,y^\ell-1>}.$
The aim of this paper, is to present a novel method to study the algebraic structure of two-dimensional cyclic codes of any length $n=s\ell$ over the finite field $\F$. By using this method, we find the generator polynomials
for ideals of $\F[x,y]/{<x^s -1,y^\ell-1>}$ 
 corresponding to two dimensional cyclic codes. These polynomials will be applied to obtain the generator matrix for two-dimensional cyclic codes.
\vspace{0.5cm}
\end{abstract}

\section{Introduction}
One of the important generalizations of the cyclic code is two-dimensional cyclic (TDC) code. 
\begin{definition}
Suppose that $C$ is a linear code over $\F$ of length $s\ell$ whose codewords are viewed as $s×\ell$ arrays. That is every codeword $c$ in $C$ has the following form
\begin{equation*} 
c= \left(
\begin{array}{ccc}
c_{0,0} \ \ \ \dots \ \ c_{0,\ell-1}\\
c_{1,0} \ \ \ \dots \ \ c_{1,\ell-1}\\
\vdots\\
c_{s-1,0} \ \ \dots \ \ c_{s-1,\ell-1}
\end{array} \right).
\end{equation*}
If $C$ is closed under row shift and column shift of codewords, then we call $C$ a $\T{TDC}$ code of size $s\ell$ over $\F$.
\end{definition}
It is well known that TDC codes of length $n=s\ell$ over the finite field $\F$ are ideals of the polynomial ring $\F[x,y]/{<x^s -1,y^\ell-1>}.$ 

The characterization for TDC codes, for the first time was presented by Ikai et al. in \cite{ik}. Since the method was pure, it didn't help decode these codes. After that, Imai introduced basic theories for binary TDC codes (\cite{im}). The structure of some two-dimensional cyclic codes corresponding to the ideals of $\F[x, y]/<x^s-1,y^{2^k}-1>$ is characterized by the present author in \cite{zahra}.

 The aim of this paper, is to find the generator matrix for TDC codes of arbitrary length $n=s\ell$ over the finite field $\F$. To achieve this aim, we present a new method to characterize ideals of the ring $\F[x, y]/<x^s-1,y^\ell-1>$ corresponding to TDC codes, and find generator polynomials for these ideals. Finally, we use these polynomials to obtain the generator matrix for corresponding TDC codes.

\begin{remark}
For simplicity of notation, we write $g(x)$ instead of $g(x)+<\fa>$ for elements of $\F[x]/<\fa>$. Similarly, we write $g(x,y)$ instead of $g(x,y)+<\fa,\fb>$ for elements of $\F[x,y]/<\fa,\fb>$.
\end{remark}

\section{Generator polynomials}
Set $R:=\F[x,y]/<x^s-1, y^{\ell}-1>$ and $S:=\F[x]/<x^s-1>$. Suppose that $I$ is an ideal of $R$. In this section, we construct ideals $I_i$  of $S$ ($i=0,\dots, \ell-1$) and prove that the monic generator polynomials of $I_i$ provide a generating set for $I$.  Since
$$\F[x,y]/<x^s-1,y^{\ell}-1>\cong (\F[x]/<x^s-1>)[y]/<y^{\ell}-1>,$$
an arbitrary element $f(x,y)$ of $I$ can be written uniquely as $f(x,y)=\sum_{i=0}^{\ell-1}f_i(x)y^i$, where $f_i(x) \in S$ for $i=0,\dots, \ell-1$.
Put
\begin{align*}
I_0=\{g_0(x) \in S: \T{there \  exists \ } & g(x,y)\in I \ \T{such \  that}\ g(x,y)=\sum_{i=0}^{\ell-1}g_i(x)y^i\}.
\end{align*}
First, we prove that $I_0$ is an ideal of the ring $S$. Assume that $g_0(x)$ is an arbitrary element of $I_0$. According to the definition of $I_0$, there exists $g(x,y)\in I$ such that $g(x,y)=\sum_{i=0}^{\ell-1}g_i(x)y^i$. Now,  $xg_0(x)\in I_0$ since $I$ is an ideal of $R$ and $xg(x,y)=\sum_{i=0}^{\ell-1}xg_i(x)y^i$ is an element of $I$. Besides, $I_0$ is closed under addition and so $I_0$ is an ideal of $S$.
It is well-known that $S$ is a principal ideal ring. Therefore, there exists a unique monic polynomial $p_0^{0}(x)$ in $S$ such that $I_0=<p_0^{0}(x)>$ and $p_0^{0}(x)$ is a divisor of $x^s-1$. So there exists a polynomial $p'_0(x)$ in $\F[x]$ such that $x^s-1=p'_0(x)p_0^{0}(x)$. 
Now, consider the following equations
\begin{align*}
f(x,y)&=f_0(x)+f_1(x)y+\dots+ f_{\ell-1}(x)y^{\ell-1}\\
yf(x,y)&=f_0(x)y+f_1(x)y^2+\dots+ f_{\ell-1}(x)y^{\ell}\\&
=f_{\ell-1}(x)+f_0(x)y+f_1(x)y^2+\dots+f_{\ell-2}(x)y^{\ell-1}. \ \ \ \ \ (y^\ell=1 \ \T{in} \ R )
\end{align*}
Since $I$ is an ideal of $R$, $yf(x, y) \in I$. So according to the definition of $I_0$, $f_{\ell-1}(x) \in I_0$.
A similar method can be applied to prove that $f_i(x) \in I_0=<p_0^{0}(x)>$ for $i=1,\dots,\ell-2$.
 So 
\begin{align}\label{1}
f_i(x)=p_0^{0}(x) q_i(x)
\end{align}
for some $q_i (x) \in S$. 
Now, $p_0^{0}(x) \in I_0$ so according to the definition of $I_0$, there exists $\fp_0(x,y) \in I$ such that $$\fp_0 (x,y)=\sum_{i=0}^{\ell-1} p_i^{0}(x)y^i.$$ 
Again since $I$ is an ideal of $R$, $y^i\fp_0(x, y) \in I$ for $i=1,\dots,\ell-1$. So according to the definition of $I_0$, $p^0_i(x) \in I_0=<p_0^{0}(x)>$. Therefore, 
\begin{align*}
p^0_i(x)=p_0^{0}(x) t^0_i(x)
\end{align*}
for some $t^0_i (x) \in S$, and so 
$$\fp_0 (x,y)=p_0^{0}(x)+\sum_{i=1}^{\ell-1} p_0^{0}(x) t^0_i(x)y^i.$$
Set 
\begin{align*}
h_1(x,y):&=f(x,y)-\fp_0(x,y)q_0(x)=\sum_{i=0}^{\ell-1}f_i(x)y^i-q_0(x)\sum_{i=0}^{\ell-1} p_i^{0}(x)y^i\\&=
f_0(x)+\sum_{i=1}^{\ell-1}f_i(x)y^i-p_0^{0}(x) q_0(x)-q_0(x)\sum_{i=1}^{\ell-1} p_i^{0}(x)y^i\\&=\sum_{i=1}^{\ell-1}f_i(x)y^i-q_0(x)\sum_{i=1}^{\ell-1} p_i^{0}(x)y^i. \ \ \ \ \ \  \ \ \ \ \  \ \ \ \ \ \T{(by\ equation\ \ref{1})} 
\end{align*}
Since $f(x,y)$ and $\fp_0(x,y)$ are in $I$ and $I$ is an ideal of $R$, $h_1(x,y)$ is a polynomial in $I$. Also note that  $h_1(x,y)$ is in the form of $h_1(x,y)=\sum_{i=1}^{\ell-1} h_i^{1}(x)y^i$ for some $h_i^{1}(x) \in S$. Now, put
\begin{align*}
I_1=\{g_1(x) \in S:& \ \T{there \  exists} \ g(x,y)\in I \ \T{such \  that \ }  g(x,y)=\sum_{i=1}^{\ell-1}g_i(x)y^i\}.
\end{align*}
By the same method being applied for $I_0$, it can be proved that $I_1$ is an ideal of $S$. Thus, there exists a unique monic polynomial $p_1^1(x)$ in $S$ such that $I_1=<p_1^{1}(x)>$ and $p_1^{1}(x)$ is a divisor of $x^s-1$. Therefore, there exists a polynomial $p'_1(x)$ in $\F[x]$ such that $x^s-1=p'_1(x)p_1^{1}(x)$. Now,  $h_1(x,y) \in I$ so according to the definition of $I_1$, $h_1^{1}(x) \in I_1=<p_1^{1}(x)>$, and so 
\begin{align}\label{2}
h_1^{1}(x)=p_1^{1}(x) q_1(x)
\end{align}
 for some $ q_1(x) \in S$. 
And now, $p_1^1(x) \in I_1$ so according to the definition of $I_1$, there exists $\fp_1(x,y) \in I$ such that $$\fp_1(x,y)=\sum_{i=1}^{\ell-1} p_i^1(x)y^i.$$
Again since $I$ is an ideal of $R$, $y^i\fp_1(x, y) \in I$. So according to the definition of $I_0$, $p^1_i(x) \in I_0=<p_0^{0}(x)>$ for $i=1,\dots,\ell-1$. Therefore, 
\begin{align*}
p^1_i(x)=p_0^{0}(x) t^1_i(x)
\end{align*}
for some $t^1_i(x) \in S$, and so 
$$\fp_1 (x,y)=\sum_{i=1}^{\ell-1} p_0^{0}(x) t^1_i(x)y^i.$$
Set
\begin{align*}
h_2(x,y):&=h_1(x,y)-\fp_1(x,y)q_1(x)=\sum_{i=1}^{\ell-1} h_i^{1}(x)y^i-q_1(x)\sum_{i=1}^{\ell-1} p_i^1(x)y^i\\&=h_1^{1}(x)y+\sum_{i=2}^{\ell-1} h_i^{1}(x)y^i-p_1^{1}(x) q_1(x)y-q_1(x)\sum_{i=2}^{\ell-1} p_i^1(x)y^i\\&=\sum_{i=2}^{\ell-1} h_i^{1}(x)y^i-q_1(x)\sum_{i=2}^{\ell-1} p_i^1(x)y^i.\ \ \ \ \ \  \ \ \ \ \  \ \ \ \ \ \T{(by\ equation\ \ref{2})} 
\end{align*}
Since $h_1(x,y)$ and $\fp_1(x,y)$ are in $I$ and $I$ is an ideal of $R$, $h_2(x,y)$ is a polynomial in $I$ in the form of $h_2(x,y)=\sum_{i=2}^{\ell-1}h_i^{2}(x)y^i$ for some $h_i^{2}(x) \in S$.
Put
\begin{align*}
I_2=\{g_2(x) \in S: & \ \T{there \  exists \ }  g(x,y)\in I \ \T{such \  that \ } g(x,y)= \sum_{i=2}^{\ell-1} g_i(x)y^i\}.
\end{align*}
Again $I_2$ is an ideal of $S$, and so there exists a unique monic polynomial $p_2^{2}(x)$ in $S$ such that $I_2=<p_2^{2}(x)>$. Also  $p_2^{2}(x)$ is a divisor of $x^s-1$, and so there exists a polynomial $p'_2(x)$ in $\F[x]$ such that $x^s-1=p'_2(x)p_2^{2}(x)$. Now, $h_2(x,y) \in I$ so according to the definition of $I_2$, $h_2^{2}(x) \in I_2=<p_2^{2}(x)>$. So 
\begin{align}\label{3}
h_2^{2}(x)=p_2^{2}(x)q_2(x)
\end{align}
for some $q_2(x)\in S$. 
Besides,  $p_2^2(x) \in I_2$ so by definition of $I_2$, there exists $\fp_2(x,y) \in I$ such that $$\fp_2(x,y)=\sum_{i=2}^{\ell-1}p_i^2(x)y^i.$$
Again since $I$ is an ideal of $R$, $y^i\fp_2(x, y) \in I$. So according to the definition of $I_0$, $p^2_i(x) \in I_0=<p_0^{0}(x)>$. Therefore, 
\begin{align*}
p^2_i(x)=p_0^{0}(x) t^2_i(x)
\end{align*}
for some $t^2_i(x) \in S$, and so 
$$\fp_2 (x,y)=\sum_{i=2}^{\ell-1} p_0^{0}(x) t^2_i(x)y^i.$$
Set 
\begin{align*}
h_3(x,y):&=h_2(x,y)-\fp_2(x,y)q_2(x)=\sum_{i=2}^{\ell-1} h_i^{2}(x)y^i-q_2(x)\sum_{i=2}^{\ell-1} p_i^2(x)y^i\\&=h_2^{2}(x)y^2+\sum_{i=3}^{\ell-1} h_i^{2}(x)y^i-p_2^{2}(x) q_2(x)y^2-q_2(x)\sum_{i=3}^{\ell-1} p_i^2(x)y^i\\&=\sum_{i=3}^{\ell-1} h_i^{2}(x)y^i-q_2(x)\sum_{i=3}^{\ell-1} p_i^2(x)y^i.\ \ \ \ \ \  \ \ \ \ \  \ \ \ \ \ \T{(by\ equation \ \ref{3})} 
\end{align*}
 Therefore, $h_3(x,y)$ is a polynomial in $I$ in the form of $h_3(x,y)=\sum_{i=3}^{\ell-1}h_i^3(x)y^i$ for some $h_i^{3}(x) \in S$.
In the next step, we put
\begin{align*}
I_3=\{&g_3(x) \in S: \T{there \  exists \ } g(x,y)\in I \ \T{such \  that \ } g(x,y)=\sum_{i=3}^{\ell-1}g_i(x)y^i\}.
\end{align*}
The same procedure is applied to obtain polynomials
$$h_4(x,y),\dots,h_{\ell-2}(x,y),\fp_3(x,y),\dots,\fp_{\ell-2}(x,y)$$
in $I$ and polynomials
$q_3(x),\dots,q_{\ell-2}(x)$ in $S$ and 
 construct ideals $I_4,\dots,I_{\ell-2}$. Finally, we set $$h_{\ell-1}(x,y):=h_{\ell-2}(x,y)-\fp_{\ell-2}(x,y)q_{\ell-2}(x).$$ Thus, $h_{\ell-1}(x,y)$ is a polynomial in $I$ in the form of $h_{\ell-1}(x,y)=h_{\ell-1}^{\ell-1}(x)y^{\ell-1}$. Set
\begin{align*}
I_{\ell-1}=\{ g_{\ell-1}(x) \in S: & \ \T{there \  exists \ }  g(x,y)\in I \ \T{such \  that \ }  g(x,y)=g_{\ell-1}(x)y^{\ell-1}\}.
\end{align*}
Clearly $I_{\ell-1}$ is an ideal of $S$. Thus, there exists a unique monic polynomial $p_{\ell-1}^{\ell-1}(x)$ in $S$ such that $I_{\ell-1}=<p_{\ell-1}^{\ell-1}(x)>$ and $p_{\ell-1}^{\ell-1}(x)$ is a divisor of $x^s-1$ (there exists $p'_{\ell-1}(x)$ in $\F[x]$ such that $x^s-1=p'_{\ell-1}(x)p_{\ell-1}^{\ell-1}(x)$). Now, $h_{\ell-1}(x,y) \in I$ so according to the definition of $I_{\ell-1}$,  $h_{\ell-1}^{\ell-1}(x) \in I_{\ell-1}=<p_{\ell-1}^{\ell-1}(x)>$. So
\begin{align}\label{tip}
h_{\ell-1}^{\ell-1}(x)=q_{\ell-1}(x)p_{\ell-1}^{\ell-1}(x)
 \end{align}
for some $q_{\ell-1}(x) \in S$. 
And now, $p_{\ell-1}^{\ell-1}(x) \in I_{\ell-1}$ so according to the definition of $I_{\ell-1}$, there exists $\fp_{\ell-1}(x,y) \in I$ such that $\fp_{\ell-1}(x,y)=p_{\ell-1}^{\ell-1}(x)y^{\ell-1}.$ 
Therefore, by equation \ref{tip}
$$h_{\ell-1}(x,y)=h_{\ell-1}^{\ell-1}(x)y^{\ell-1}=q_{\ell-1}(x)p_{\ell-1}^{\ell-1}(x)y^{\ell-1}=q_{\ell-1}(x)\fp_{\ell-1}(x,y).$$
Again since $I$ is an ideal of $R$, $y\fp_{\ell-1}(x, y) \in I$. So according to the definition of $I_0$, $p^{\ell-1}_{\ell-1}(x) \in I_0=<p_0^{0}(x)>$. Thus,  
\begin{align*}
p^{\ell-1}_{\ell-1}(x)=p_0^{0}(x) t^{\ell-1}_{\ell-1}(x)
\end{align*}
for some $t^{\ell-1}_{\ell-1}(x) \in S$, and so 
$$\fp_{\ell-1} (x,y)=p_0^{0}(x) t^{\ell-1}_{\ell-1}(x)y^{\ell-1}.$$
Therefore, for an arbitrary element $f(x,y) \in I$ we show that
\begin{align*}
& h_1(x,y):=f(x,y)-\fp_0(x,y)q_0(x)\\&
h_2(x,y):=h_1(x,y)-\fp_1(x,y)q_1(x)\\&
h_3(x,y):=h_2(x,y)-\fp_2(x,y)q_2(x)\\&
\dots\\&
h_{\ell-1}(x,y):=h_{\ell-2}(x,y)-\fp_{\ell-2}(x,y)q_{\ell-2}(x)\\&
h_{\ell-1}(x,y)=q_{\ell-1}(x)\fp_{\ell-1}(x,y).
\end{align*}
So 
\begin{align*}
f(x,y)&=\fp_0(x,y)q_0(x)+\fp_1(x,y)q_1(x)+\fp_2(x,y)q_2(x)\\&+
\dots+\fp_{\ell-2}(x,y)q_{\ell-2}(x)+\fp_{\ell-1}(x,y)q_{\ell-1}(x).
\end{align*}
Since $\fp_{i}(x,y)\in I$ for $i=0,\dots,\ell-1$ and $f(x,y)$ is an arbitrary element of $I$ and $I$ is an ideal of $R$, we conclude that 
$$I=<\fp_0(x,y),\dots,\fp_{\ell-1}(x,y)>,$$
where $\fp_j (x,y)=\sum_{i=j}^{\ell-1} p_0^{0}(x) t^j_i(x)y^i$. 
So $\{\fp_0(x,y), \fp_1(x,y),\dots,\fp_{\ell-1}(x,y)\}$ is a set of  generating polynomials for $I$. 

In the next theorem, we introduce the generator matrix for TDC codes.
\begin{theorem}
Suppose that $I$ is an ideal of $\F[x,y]/<x^s-1,y^{\ell}-1>$ and is generated by $\{\fp_0(x,y),\dots,\fp_{\ell-1}(x,y)\}$, which obtained from the above method. Then the set
\begin{align*}
\{&\fp_0(x,y),x\fp_0(x,y),\dots,x^{s-a_0-1}\fp_0(x,y), \\ & \fp_1(x,y),x\fp_1(x,y),\dots,x^{s-a_1-1}\fp_1(x,y), \\ & \ \ \ \ \ \ \ \ \ \ \ \ \ \ \ \ \ \ \ \  \ \ \ \ \ \vdots \\ & \fp_{\ell-1}(x,y),x\fp_{\ell-1}(x,y),\dots,x^{s-a_{\ell-1}-1}\fp_{\ell-1}(x,y)\}
\end{align*}
forms an $\F$-basis for $I$, where  $a_i=\T{deg}(p_{i}^i(x))$.
\end{theorem}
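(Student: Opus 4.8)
The plan is to establish the two defining properties of an $\F$-basis separately: that the displayed set spans $I$ over $\F$, and that it is $\F$-linearly independent. Both arguments rest on the triangular (staircase) structure of the generating set coming out of the construction, namely that each $\fp_j(x,y)$ has lowest-occurring power of $y$ equal to $y^j$, with coefficient $p_j^j(x)$ the monic generator of $I_j$, of degree $a_j$ and dividing $x^s-1$. First I would record the one-dimensional fact underlying everything: for a monic divisor $p(x)$ of $x^s-1$ of degree $a$, the polynomials $p(x),\,xp(x),\,\dots,\,x^{s-a-1}p(x)$ form an $\F$-basis of the ideal $\langle p(x)\rangle$ of $S$. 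Independence here is immediate, since these have the distinct degrees $a,a+1,\dots,s-1$, all strictly below $s$, so no reduction modulo $x^s-1$ takes place and the residue map is injective on them.

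For spanning I would argue by ascending induction on the power of $y$. Given $f(x,y)\in I$, write $f=\sum_{i=0}^{\ell-1}f_i(x)y^i$. The coefficient $f_0$ lies in $I_0=\langle p_0^0\rangle$, so the lemma lets me write $f_0=\sum_k \alpha_k x^k p_0^0$ with $0\le k\le s-a_0-1$; subtracting $\sum_k\alpha_k x^k\fp_0$ cancels the $y^0$-coefficient exactly, because multiplication by $x^k$ acts coefficientwise in $S$ and does not move a term from one power of $y$ to another, and it leaves an element of $I$ with no $y^0$-term. By the definition of $I_1$, the resulting $y^1$-coefficient now lies in $I_1=\langle p_1^1\rangle$, and I repeat, subtracting an $\F$-combination of the $x^k\fp_1$. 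Iterating through $j=0,1,\dots,\ell-1$ — at each stage the $y^j$-coefficient of the reduced element belongs to $I_j$ precisely because the lower $y$-terms have already been annihilated — exhausts $f$ and writes it as an $\F$-combination of the listed elements.

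For linear independence, suppose $\sum_{j=0}^{\ell-1}\sum_{k=0}^{s-a_j-1}c_{j,k}\,x^k\fp_j(x,y)=0$ in $R$, and read off the coefficients of $y^0,y^1,\dots$ in turn. Only $\fp_0$ contributes to $y^0$, with $y^0$-coefficient $p_0^0$, so the relation forces $\sum_k c_{0,k}x^k p_0^0=0$ in $S$; by the independence in the lemma every $c_{0,k}=0$. With the $\fp_0$-terms eliminated, only $\fp_1$ now contributes to $y^1$, giving $\sum_k c_{1,k}x^k p_1^1=0$ and hence $c_{1,k}=0$; climbing up the powers of $y$ forces each $c_{j,k}$ to vanish in succession.

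The computational content is routine; the only real obstacle is the bookkeeping of the triangular structure. I must verify that at reduction step $j$ the residual element genuinely has vanishing $y^0,\dots,y^{j-1}$ coefficients, so that its $y^j$-coefficient is exactly the sort of element captured by $I_j$, and, dually in the independence argument, that multiplication by $x^k$ never transfers a term from one power of $y$ to another — which holds because in $R\cong S[y]/\langle y^\ell-1\rangle$ the relations $x^s-1$ and $y^\ell-1$ act on separate variables, so $x$-multiplication stays within each $y$-coefficient. Once this is pinned down both halves close at once, and counting the listed elements yields $\dim_\F I=\sum_{j=0}^{\ell-1}(s-a_j)$ as a corollary.
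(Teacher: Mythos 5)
Your proposal is correct, and its linear-independence half is in substance the paper's entire proof: the paper assumes a relation $l_0(x)\fp_0(x,y)+\dots+l_{\ell-1}(x)\fp_{\ell-1}(x,y)=0$ with $\deg(l_i)<s-a_i$, reads off the $y^0$-coefficient to get $l_0(x)p_0^0(x)=0$ in $S$, lifts this to $l_0(x)p_0^0(x)=s(x)(x^s-1)$ in $\F[x]$, and kills $l_0$ by comparing degrees (at most $s-1$ on the left, at least $s$ on the right), then climbs the powers of $y$ exactly as you do; your ``distinct degrees $a_0,\dots,s-1$'' observation is the same estimate in different clothing. Where you genuinely differ is that the paper's proof consists \emph{only} of this independence argument: spanning is never addressed beyond Section 2's identity $f(x,y)=\sum_i q_i(x)\fp_i(x,y)$ with $q_i(x)\in S$ unrestricted in degree, which only exhibits $I$ as the $\F$-span of the larger set $\{x^k\fp_i : 0\le k\le s-1\}$. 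Truncating to $k\le s-a_i-1$ is not automatic: replacing $q_i$ by its reduction $r_i$ with $\deg(r_i)<s-a_i$ leaves the $y^i$-coefficient unchanged but alters the coefficients of $y^{i+1},\dots,y^{\ell-1}$, since the polynomials $p^i_m(x)$ for $m>i$ need not be multiples of $p_i^i(x)$, so the discrepancy must be absorbed by the later generators. Your iterative staircase reduction --- cancel the $y^j$-coefficient using the one-dimensional basis $\{x^kp_j^j(x)\}_{k<s-a_j}$ of $\langle p_j^j(x)\rangle$ in $S$, and note that the residual element still lies in $I$ with support in $y^{j+1},\dots,y^{\ell-1}$, so that its next coefficient lands in $I_{j+1}$ by the very definition of $I_{j+1}$ --- is precisely that missing argument, and it is sound because multiplication by $x$ acts coefficientwise on the $y$-expansion in $R\cong S[y]/\langle y^\ell-1\rangle$. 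So your write-up proves strictly more than the paper's own proof: it supplies the spanning half that the paper leaves implicit, reproduces the paper's independence computation, and yields $\dim_\F I=\sum_{j=0}^{\ell-1}(s-a_j)$ as a byproduct.
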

\begin{proof}
Assume that $l_0(x),\dots, l_{\ell-1}(x)$ are polynomials in $\F[x]$ such that $\T{deg}(l_i(x))< s-a_i$ and $l_0(x) \fp_0(x,y)+\dots+l_{\ell-1}(x)\fp_{\ell-1}(x,y)=0$. These imply the following equation in $S$ $$l_0(x)  p_0^0(x)=0.$$ Therefore, 
$l_0(x) p_0^0(x)=s(x) (x^s-1)$ for some $s(x) \in \F[x]$. Now,  the degree of $x$ in the right side of this equation is at least $s$ but since $\T{deg}(p_0^0(x))=a_0$ and $\T{deg}(l_0(x))< s-a_0$, the degree of $x$ in the left side of this equation is at most $s-1$. So we get
$l_0(x)=0$. Similar arguments yield $l_i(x)=0$ for $i=1,\dots,\ell-1$.
\end{proof}

\section{Conclusion}
In this paper, we present a novel method for studying the structure of TDC codes of length $n=s\ell$. This leads to studying the structure of ideals of the ring $\F[x,y]/<x^s-1,y^{\ell}-1>$. By using the novel method, we obtain generating sets of polynomials and generator matrix for TDC codes.

 \end{document}